\documentclass[12pt,a4paper,reqno]{amsart}
\allowdisplaybreaks
\usepackage{amsmath}
\usepackage{amsfonts}
\usepackage{amssymb,amsthm,amsfonts,amsthm,latexsym,enumerate,url,cases}
\usepackage{booktabs,array,longtable}
\numberwithin{equation}{section}
\usepackage{mathrsfs}
\usepackage{hyperref}
\hypersetup{colorlinks=true,citecolor=blue,linkcolor=blue,urlcolor=blue}
\newcommand{\N}{\mathbb N}
\newcommand{\R}{\mathbb R}
\newcommand{\cH}{\mathcal H}
\newcommand{\cC}{\mathcal C}
\newcommand{\cP}{\mathcal P}
\newcommand{\cR}{\mathcal R}
\newcommand{\1}{\mathbf 1}
\newcommand{\bsu}{\boldsymbol u}
\newcommand{\bsy}{\boldsymbol y}
\newcommand{\bsz}{\boldsymbol z}
\newcommand{\eps}{\varepsilon}
\DeclareMathOperator{\vol}{vol}

\theoremstyle{plain}
\newtheorem{theorem}{Theorem}[section]
\newtheorem{lemma}[theorem]{Lemma}
\newtheorem{problem}{Problem}
\newtheorem{corollary}[theorem]{Corollary}
\newtheorem{proposition}[theorem]{Proposition}

\theoremstyle{definition}

\usepackage{etoolbox}
\makeatletter
\patchcmd{\@settitle}{\uppercasenonmath\@title}{}{}{}
\patchcmd{\@setauthors}{\MakeUppercase}{}{}{}
\patchcmd{\section}{\scshape}{}{}{}
\makeatother

\begin{document}

\title[{Products of Two Integers Avoiding Perfect Powers}]{Products of Two Integers Avoiding Perfect Powers}

\author{Quan-Hui Yang}
\address[Quan-Hui Yang]{Ministry of Education Key Laboratory for NSLSCS \\ School of Mathematical Sciences \\ Nanjing Normal University \\ Nanjing 210023 \\ China}
\email{yangquanhui01@163.com}
\author{Lilu Zhao}
\address[Lilu Zhao]
{School of Mathematical Sciences, University of Science and Technology of China, Hefei, Anhui, 230026, P.R. China}
\email{zhaolilu@ustc.edu.cn}

\keywords{perfect powers; multiplicative extremal problems; squarefree integers;
lattice points; Euler products.}
\subjclass[2020]{Primary:  05D05, 06B99, 11B75, 11N25.}

\begin{abstract}
For integers $d\geq 3$, let $F_{2,d}(n)$ be the largest size of a subset
of $[n]$ containing no two distinct elements whose product is a perfect
$d$-th power, and let $f_{2,d}(n)$ denote the analogous quantity when the
two elements need not be distinct. Fleiner, Juh\'asz, K\"ov\'er, Pach,
and S\'andor proved that both complements have order $n^{2/3}$ when
$d=3$, and asked for a leading constant.  They also asked whether, more
generally, $n-F_{k,d}(n)$ and $n-f_{k,d}(n)$ have order $n^{k/d}$ for
$1<k<d$.

We establish asymptotic formula in the case $k=2$ for every fixed $d\geq3$,
\[
 n-F_{2,d}(n)\sim n-f_{2,d}(n)
 \sim C_d\, n^{2/d}(\log n)^{d-3},
\]
where $C_d>0$ is given explicitly by an Euler product and a polytope
volume.  In particular, the extra logarithmic factor gives a negative
answer to the second question for every $d\geq4$.  For $d=3$ we obtain
\[
 C_3=\frac{\pi^2}{4}
 \prod_p\left(1-\frac3{p^2}+\frac2{p^3}\right),
\]
which answers the first question. 
The proof uses an exact decomposition into complementary $d$-free
kernel classes, a squarefree sieve in multiplicative boxes, and a
two-height polytope calculation.
\end{abstract}

 \maketitle

\section{Introduction}

Let $\N=\{1,2,\cdots\}$ denotes the set of positive integers, and write $[n]=\{1,\ldots,n\}$.  For integers $k,d\geq2$, let
$F_{k,d}(n)$ be the largest size of a set $A\subseteq[n]$ for which
\[
 a_1\cdots a_k=x^d,
 \qquad a_1<\cdots<a_k,
\]
has no solution with $a_1,\ldots,a_k\in A$ and $x\in\mathbb{N}$.  The square
case was introduced by Erd\H{o}s, S\'ark\"ozy, and T. S\'os
\cite{ESS}.  Very recently, Fleiner, Juh\'asz, K\"ov\'er, Pach, and S\'andor
\cite{FJKPS} studied cubes and also introduced a closely related
function $f_{k,d}(n)$, in which repetitions among the $a_i$ are allowed,
and the specified trivial solutions are disregarded. Precisely, a solution is called trivial if the multiset $\{a_1,\ldots,a_k\}$ can be partitioned into $d$-element blocks, each block consisting of $d$ copies of the same integer. Gy\H{o}ri \cite{G},  Pach-Vizer \cite{PV} and Tao \cite{Tao} studied the square case (i.e. $d=2$) in this topic. For related results, one may also refer to Pach \cite{Pach2015,Pach2019} and Verstra\"ete \cite{V}.

In this paper, we focus on the case $k=2$. For $d=3$, the following result was proved by Fleiner, Juh\'asz, K\"ov\'er, Pach, and S\'andor \cite{FJKPS}.

\begin{theorem}[Theorem 6 \cite{FJKPS}]\label{thm1}There exist positive constant $c_1$ and $c_2$ such that 
$$c_1n^{2/3}<n-F_{2,3}(n)\le n-f_{2,3}(n)<c_2n^{2/3}.$$
\end{theorem}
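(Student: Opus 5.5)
We prove the two nontrivial inequalities separately; the middle one, $F_{2,3}(n)\ge f_{2,3}(n)$, is immediate because a set avoiding all nontrivial solutions with repetition allowed certainly avoids solutions with $a_1<a_2$. Throughout we write every $a\in[n]$ uniquely as $a=m^3 u v^2$ with $u,v$ squarefree and coprime; call $(u,v)$ the \emph{cube kernel} of $a$. Then $a_1a_2$ is a cube iff $u_1v_1^2\cdot u_2v_2^2$ is a cube, i.e.\ iff $(u_2,v_2)=(v_1,u_1)$. Hence the integers in $[n]$ are partitioned into classes $K(u,v)=\{m^3uv^2\le n\}$. Any two elements of one class $K(u,1)$ or $K(1,v)$ multiply to a cube only across the pair $K(u,v)\leftrightarrow K(v,u)$. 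The product of two elements is a cube exactly when they lie in "dual" classes $K(u,v)$ and $K(v,u)$.

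\emph{Upper bound for $n-f_{2,3}(n)$.} For each unordered dual pair $\{(u,v),(v,u)\}$ with $u\ne v$, keep the larger class and discard the smaller. For the self-dual class $u=v=1$ (the cubes), discard everything: for $f$ the product $a\cdot a$ with $a$ a cube is trivial only with $d$ copies, so $a^2=x^3$ with repetition counts; in any case the cubes number $n^{1/3}$. The resulting set has no cube products. The number discarded is at most
\[
n^{1/3}+\sum_{\substack{u<v\text{ or }v<u\\ (u,v)=1}}\min\bigl(|K(u,v)|,|K(v,u)|\bigr)\le n^{1/3}+\sum_{u,v}\min\Bigl(\bigl(\tfrac{n}{uv^2}\bigr)^{1/3},\bigl(\tfrac{n}{vu^2}\bigr)^{1/3}\Bigr),
\]
where we need $K$ nonempty, i.e.\ $\max(uv^2,vu^2)\le n$ for the min to be positive. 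Taking $u\le v$ the term is $(n/(uv^2))^{1/3}$, summed over $u\le v$ with $uv^2\le n$ and $vu^2\le n$ (only the smaller class has to be nonempty; the bigger term is the one with $u\le v$, and we need the smaller one, $vu^2\cdot$… indexed correctly, nonempty). Summing $(n/(uv^2))^{1/3}$ over $v\ge u$, $v\le n/u^2$ gives $\ll n^{1/3}u^{-1/3}u^{-1/3}=n^{1/3}u^{-2/3}$, then summing over $u\le n^{1/3}$ gives $\ll n^{1/3}\cdot n^{1/9}$. This is too crude, so the routine estimate must be done in the correct order: the smaller class is $K(v,u)$ with size $(n/(vu^2))^{1/3}$ when $u\le v$, nonempty requires $u^2v\le n$; sum over $v\le n/u^2$ of $v^{-1/3}$ gives $(n/u^2)^{2/3}$, times $n^{1/3}u^{-2/3}$ gives $n\,u^{-2}$—wrong direction. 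The correct computation is $\sum_{u\le v,\ uv^2\le n}(n/(uv^2))^{1/3}$, i.e.\ the minimum is the class with the larger modulus $uv^2$. For fixed $u$, $\sum_{v\ge u} v^{-2/3}$ up to $\sqrt{n/u}$ is $\ll (n/u)^{1/6}$, giving $n^{1/2}u^{-1/2}$; summing over $u\le n^{1/3}$ yields $\ll n^{1/2}n^{1/6}=n^{2/3}$. This is the main bookkeeping step, and the order of summation is what I expect to be the only real obstacle.

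\emph{Lower bound for $n-F_{2,3}(n)$.} Given $A$ with no two distinct elements having cube product, for each dual pair $u\ne v$ the set $A$ cannot meet both $K(u,v)$ and $K(v,u)$. Within the cube class, $A$ has at most one element. Hence $n-|A|\ge\sum_{\{u,v\}}\min(|K(u,v)|,|K(v,u)|)$. Restrict to $u<v\le 2u$ squarefree coprime with $uv^2\le n/8$; then both classes have size $\asymp (n/(uv^2))^{1/3}\gg 1$. Summing over dyadic $u\asymp U\le (n/8)^{1/3}/2$ with a positive proportion of coprime squarefree pairs gives $\gg U^2\cdot (n/U^3)^{1/3}=U n^{1/3}$. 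Taking $U\asymp n^{1/3}$ gives $\gg n^{2/3}$, completing the proof.
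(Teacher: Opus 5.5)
Your argument is correct, and its skeleton is the paper's. Your $K(u,v)$ is the class $\cC_r(n)$ with $r=uv^2$ and $r^*=u^2v$ (Lemma~\ref{lem:kernelcriterion}), and keeping the larger class of each dual pair is exactly the local choice behind Proposition~\ref{prop:exact}.

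The difference is in the analysis. The paper never proves Theorem~\ref{thm1} directly; it recovers it from the asymptotic in Corollary~\ref{cor:d3}. That asymptotic requires rewriting the complement as $\sum_t K_3(n/t^3)$, the box sieve of Lemma~\ref{lem:boxsieve}, the volume asymptotic of Lemma~\ref{lem:volume}, boundary control, and a lattice-point sandwich. Since you need only orders of magnitude, you do two simpler things:
\begin{itemize}
\item For the upper bound you use the direct double sum $\sum_{u<v,\ uv^2\le n}(n/(uv^2))^{1/3}\ll n^{1/2}\sum_{u\le n^{1/3}}u^{-1/2}\ll n^{2/3}$. After reordering, this amounts to the case $d=3$ of \eqref{eq:Kdupper}, where no logarithm appears.
\item For the lower bound you use only the block $u\asymp v\asymp n^{1/3}$. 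This is precisely the neighbourhood of the point $\cP_3=\{(1/3,1/3)\}$ on which the paper's volume concentrates.
\end{itemize}
Your route is short and elementary. The paper's route buys the constant $C_3=\frac{\pi^2}{4}\Delta_2$ and the extension to all $d$, where the extra factor $(\log n)^{d-3}$ appears.

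Two things to tidy.

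First, the one nontrivial input you assert without proof is that coprime squarefree pairs have positive density in $\{U\le u\le 2U,\ u<v\le 2u\}$. This is true, and you should write out one of two justifications:
\begin{itemize}
\item It is the qualitative content of Lemma~\ref{lem:boxsieve} with $q=2$ (density $\Delta_2>0$). Apply it to a sub-box such as $[U,1.2U]\times[1.3U,2U]$, which lies in your region once $U$ is a small enough multiple of $n^{1/3}$.
\item Alternatively, in a box $I\times J$ the proportion of squarefree pairs with a fixed prime $p$ dividing both coordinates tends to $(p+1)^{-2}$, and $\sum_p(p+1)^{-2}<1/4$. A union bound over $p\le P$, together with the trivial bound $(|I|/p+1)(|J|/p+1)$ for $p>P$, then leaves a positive proportion once $P$ is large.
\end{itemize}

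Second, remove from the upper-bound paragraph the abandoned computations, which contain incorrect intermediate bounds, and the garbled sentence about $K(u,1)$ and $K(1,v)$. All you need there is that for $u<v$ the smaller class of the dual pair is $K(u,v)$, because $uv^2>u^2v$.
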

As a strengthen of Theorem \ref{thm1}, the authors of \cite{FJKPS} further proposed the following problem. 
\begin{problem}[Problem 35 \cite{FJKPS}]\label{P1}Is it true that there exists a constant $c$ such that 
$$n-f_{2,3}(n)=(c+o(1))n^{2/3}.$$
\end{problem}
Fleiner et al. \cite{FJKPS} also proposed the following problem as an extension of Theorem \ref{thm1}.
\begin{problem}[Problem 34 \cite{FJKPS}]\label{P2}Suppose that $1<k<d$. Is it true that
\begin{align}\label{P2eq}
n^{k/d}\ll n-F_{k,d}(n)\leq n-f_{k,d}(n)\ll n^{k/d}\ ?
\end{align}
\end{problem}

The purpose of this paper is to settle Problem \ref{P1} affirmatively and settle Problem \ref{P2} negatively (when
$k=2$).

To state the constant, put $q=d-1$ and define
\begin{align}
 \Delta_q:=\prod_p\left(1-\frac1p\right)^q
                  \left(1+\frac qp\right).
 \label{eq:Delta}
\end{align}
The product converges absolutely and is positive.  Let
\begin{align}
 \cP_d:=\left\{\bsy\in\R_{\geq0}^{d-1}:
 \sum_{j=1}^{d-1}j y_j=1,
 \quad
 \sum_{j=1}^{d-1}(d-j)y_j=1
 \right\},
\label{eq:polytope}
\end{align}
and set
\begin{align}
 J_d:=d^{3/2}(d-1)\sqrt{\frac{d-2}{12}},
 \qquad
 \alpha_d:=J_d^{-1}d^2\,
 \cH^{d-3}(\cP_d),
\label{eq:alpha}
\end{align}
where $\cH^{d-3}$ is the Hausdorff measure of dimension $d-3$. We remake that $\cH^0$ is the counting measure.  The point
$y_1=\cdots=y_{d-1}=2/(d(d-1))$ lies in $\cP_d$, so
$\alpha_d>0$: the two defining row vectors are linearly independent,
and this all-positive point is a relative-interior point of their
$(d-3)$-dimensional affine fiber.

Throughout, $\mu$ denotes the M\"obius function and let all logarithms be natural unless
another base is displayed. The main result in this paper is the following.

\begin{theorem}[Main theorem]\label{thm:main}
For every fixed integer $d\geq3$, as $n\to\infty$,
\begin{align}
 n-F_{2,d}(n)\sim n-f_{2,d}(n)
 \sim C_d\, n^{2/d}(\log n)^{d-3},
\label{eq:main}
\end{align}
where
\begin{align}
 C_d=\frac{\pi^2}{12}\,\alpha_d\,\Delta_{d-1}.
\label{eq:Cd}
\end{align}
Moreover,
\begin{align}
 F_{2,d}(n)-f_{2,d}(n)=
 \begin{cases}
 1,&d\text{ odd},\\[2mm]
 \displaystyle\sum_{s\leq n^{2/d}}\mu^2(s),&d\text{ even}.
 \end{cases}
\label{eq:exactdifference}
\end{align}
\end{theorem}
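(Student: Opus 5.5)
The plan is to reduce everything to an exact combinatorial identity and then count lattice points. Every $a\in[n]$ can be written uniquely as $a=r m^d$ with $r$ $d$-th-power-free; call $r=\kappa(a)$ the \emph{kernel}. For $r=\prod_p p^{e_p}$ with $1\le e_p\le d-1$, define the complementary kernel $r^*=\prod_{p\mid r}p^{d-e_p}$. Then $ab$ is a $d$-th power if and only if $\kappa(b)=\kappa(a)^*$.

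\textbf{Step 1: exact decomposition.} Let $N(r)=\lfloor (n/r)^{1/d}\rfloor$ be the size of the class of $r$ in $[n]$. The ``product is a $d$-th power'' graph on $[n]$ is then a disjoint union of pieces of two kinds:
\begin{itemize}
\item for $r\ne r^*$, the complete bipartite graphs $K_{N(r),N(r^*)}$;
\item for $r=r^*$, cliques on $N(r)$ vertices; for $f_{2,d}$ these also carry loops, since $a^2$ is a $d$-th power for such $a$, and $d\ge3$ means $\{a,a\}$ is never a trivial solution.
\end{itemize}
Hence a maximum admissible set is obtained componentwise, and
\[
n-F_{2,d}(n)=\sum_{\{r,r^*\},\,r\ne r^*}\min(N(r),N(r^*))+\sum_{r=r^*,\,N(r)\ge1}(N(r)-1).
\]
For $n-f_{2,d}(n)$ the last sum is replaced by $\sum_{r=r^*}N(r)$.

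The condition $r=r^*$ forces every $e_p=d/2$. So for $d$ odd only $r=1$ is self-complementary. For $d$ even the self-complementary kernels are exactly $r=s^{d/2}$ with $s$ squarefree, and $N(r)\ge1$ means $s\le n^{2/d}$. This gives \eqref{eq:exactdifference} immediately. The self-complementary contribution is $O(n^{2/d})$, so it is negligible for $d\ge4$. For $d=3$ it is only the class $r=1$, of size $n^{1/3}$, which is also negligible. Thus $F$ and $f$ have the same asymptotics, and it suffices to treat the bipartite sum.

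\textbf{Step 2: removing the floors.} Since $\min(N(r),N(r^*))=\#\{m\ge1: m^d\max(r,r^*)\le n\}$, the bipartite sum equals
\[
\tfrac12\sum_{m\ge1}\#\{r\ne r^*:\ \max(r,r^*)\le n/m^d\}.
\]
This exact rewriting matters: the number of pairs is itself of order $n^{2/d}(\log n)^{d-3}$, so we cannot afford an $O(1)$ rounding error per pair. Therefore everything reduces to showing
\[
R(X):=\#\{r:\ r\le X,\ r^*\le X\}\sim \alpha_d\Delta_{d-1}X^{2/d}(\log X)^{d-3}.
\]
Summing over $m$ then gives $\sum_m (n/m^d)^{2/d}=\zeta(2)n^{2/d}$ times the same log factor, with uniformity in $m$ handled by a crude bound $R(X)\ll X^{2/d}(\log 2X)^{d-3}$. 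Together with the factor $\tfrac12$ this produces $\frac{\pi^2}{12}\alpha_d\Delta_{d-1}$.

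\textbf{Step 3: the lattice count (main obstacle).} Parametrize $r=\prod_{j=1}^{d-1}y_j^{\,j}$ with $y_1,\dots,y_{d-1}$ squarefree and pairwise coprime, so that $r^*=\prod_j y_j^{\,d-j}$. The problem becomes counting $(d-1)$-tuples of such integers in the ``multiplicative box''
\[
\sum_j j\log y_j\le \log X,\qquad \sum_j (d-j)\log y_j\le\log X.
\]
The sieve for squarefree, pairwise coprime tuples contributes the Euler product $\Delta_{d-1}$. This is seen from the Dirichlet series $\sum\prod_j y_j^{-s}$, which equals $\zeta(s)^{d-1}$ times an absolutely convergent product whose value at $s=1$ is $\Delta_{d-1}$. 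One can handle it by a M\"obius/convolution argument in dyadic boxes, which gives the density $\Delta_{d-1}$ uniformly.

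What remains is a continuous volume computation. With $y_j=X^{t_j}$, the count is $\Delta_{d-1}$ times the integral of $X^{\sum t_j}(\log X)^{d-1}$ over the polytope $\{\sum jt_j\le1,\ \sum(d-j)t_j\le1\}$. Adding the two constraints gives $d\sum t_j\le 2$, so $X^{\sum t_j}$ is maximized, with value $X^{2/d}$, exactly on the face $\cP_d/1$ where both constraints are tight. Laplace's method near this codimension-$2$ face loses two powers of $\log X$, leaving $(\log X)^{d-3}$ times the $(d-3)$-dimensional measure of $\cP_d$. The two transverse directions contribute a Jacobian, which is where $J_d$ and the factor $d^2$ come from. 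I expect this ``two-height'' localization to be the hardest part: one must make the Laplace step rigorous while keeping the arithmetic error terms from the squarefree sieve smaller than $X^{2/d}(\log X)^{d-3}$ near the corners of the box. I would do this by cutting into dyadic log-boxes, applying the sieve asymptotic in each box, and bounding the contribution away from the ridge by $X^{2/d-\delta}$.
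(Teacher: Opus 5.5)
Your proposal is correct and follows essentially the same route as the paper. The shared steps are: the complementary-kernel decomposition; the exact rewriting $\min(N(r),N(r^*))=\#\{m:\ m^d\max(r,r^*)\le n\}$, which is the paper's $\frac12\sum_t\bigl(K_d(n/t^d)+E_d(n/t^d)\bigr)$; reduction to counting squarefree coprime tuples with $\prod u_j^{\,j}\le X$ and $\prod u_j^{\,d-j}\le X$; a M\"obius sieve in multiplicative boxes giving $\Delta_{d-1}$; a coarea computation (your two-direction Laplace step) giving $\alpha_d$; and dyadic bounds for small coordinates and for large $m$.

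One point needs adjusting. The mass sits within $O(1)$, in logarithmic scale, of both height surfaces, so ratio-$2$ boxes straddling $\partial\cR_d(X)$ carry a positive proportion of the count, and you cannot simply discard them. This is why the paper uses boxes of ratio $\lambda=1+\eta$ and bounds the shell $\cR_d(X\lambda^{M_d})\setminus\cR_d(X\lambda^{-M_d})$ by $O(\eta)X^{2/d}(\log X)^{d-3}$ before letting $\eta\to0$. Alternatively, you could keep dyadic boxes and run the sieve on each box intersected with $\cR_d(X)$.
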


The power of $\log n$ in \eqref{eq:main} has a geometric explanation.
After logarithmic coordinates are introduced, the relevant region has
two linear height constraints.  The exponential weight is maximized
where both constraints are equalities, and this maximizing face has
dimension $d-3$.

For $d=3$, the polytope $\cP_3$ is a point and $\alpha_3=3$.
For $d=4$, it is a line segment and $\alpha_4=1$.  We therefore obtain
the following explicit consequences.

\begin{corollary}[Solution of Problem 35 \cite{FJKPS}]\label{cor:d3}
We have
\[
 n-F_{2,3}(n)\sim n-f_{2,3}(n)\sim C_3\,n^{2/3},
\]
where
\[
 C_3=\frac{\pi^2}{4}
 \prod_p\left(1-\frac3{p^2}+\frac2{p^3}\right)
 =0.7075209\ldots.
\]
\end{corollary}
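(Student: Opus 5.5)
The corollary is the case $d=3$ of Theorem~\ref{thm:main}, so the plan is to substitute $d=3$ into \eqref{eq:main} and \eqref{eq:Cd} and evaluate the constant. For $d=3$ the factor $(\log n)^{d-3}$ equals $1$, so \eqref{eq:main} reads
\[
n-F_{2,3}(n)\sim n-f_{2,3}(n)\sim C_3\,n^{2/3},
\qquad
C_3=\frac{\pi^2}{12}\,\alpha_3\,\Delta_2 .
\]
It then remains to compute $\alpha_3$ and $\Delta_2$ separately.

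\emph{Computing $\alpha_3$.} With $d=3$, the polytope \eqref{eq:polytope} is $\cP_3=\{(y_1,y_2)\in\R_{\geq0}^2:\ y_1+2y_2=1,\ 2y_1+y_2=1\}$. This linear system has the unique solution $y_1=y_2=1/3$, which is nonnegative, so $\cP_3$ is a single point. Since $\cH^0$ is the counting measure, $\cH^0(\cP_3)=1$. From \eqref{eq:alpha},
\[
J_3=3^{3/2}\cdot 2\cdot\sqrt{1/12}=\frac{2\cdot 3\sqrt3}{2\sqrt3}=3,
\qquad
\alpha_3=J_3^{-1}\cdot 9\cdot 1=3 .
\]

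\emph{Computing $\Delta_2$.} Taking $q=2$ in \eqref{eq:Delta}, each Euler factor expands as
\[
\Bigl(1-\tfrac1p\Bigr)^2\Bigl(1+\tfrac2p\Bigr)
=\Bigl(1-\tfrac2p+\tfrac1{p^2}\Bigr)\Bigl(1+\tfrac2p\Bigr)
=1-\tfrac3{p^2}+\tfrac2{p^3}.
\]
Hence $\Delta_2=\prod_p\bigl(1-3/p^2+2/p^3\bigr)$. Combining the two computations gives
\[
C_3=\frac{\pi^2}{12}\cdot 3\cdot\Delta_2=\frac{\pi^2}{4}\prod_p\Bigl(1-\frac3{p^2}+\frac2{p^3}\Bigr).
\]

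\emph{Numerical value.} This is the only step needing any care. I would compute the product over $p\le P$ explicitly, for a moderately large $P$. For the tail, each factor satisfies $\log(1-3/p^2+2/p^3)=-3/p^2+O(p^{-3})$, so the omitted factors contribute $\exp\bigl(-3\sum_{p>P}p^{-2}+O(P^{-2})\bigr)$. Using $\sum_{p>P}p^{-2}\le 1/P$, or a prime-number-theorem estimate for this sum, gives rigorous error bars sufficient to certify $C_3=0.7075209\ldots$. The factor $p=2$ equals $1-3/4+1/4=1/2$, and the remaining factors are positive, which confirms that the product, and hence $C_3$, is positive.
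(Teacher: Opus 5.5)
Your proposal is correct and is the paper's argument: specialize Theorem~\ref{thm:main} to $d=3$, observe that $\cP_3=\{(1/3,1/3)\}$ and $J_3=3$ so $\alpha_3=3$, and expand the Euler factor of $\Delta_2$ to get $C_3=\frac{\pi^2}{4}\Delta_2$. The paper states the numerical value $0.7075209\ldots$ without justification, so your tail-estimate sketch for certifying it is a sound addition rather than a different route.
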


\begin{corollary}[A negative answer to Problem 34 \cite{FJKPS}]\label{cor:d4}
For $d=4$,
\[
 n-F_{2,4}(n)\sim n-f_{2,4}(n)
 \sim C_4\,\sqrt n\log n,
\]
where
\[
 C_4=\frac{\pi^2}{12}
 \prod_p\left(1-\frac6{p^2}+\frac8{p^3}-\frac3{p^4}\right)
 =0.0944883\ldots.
\]
More generally, \eqref{P2eq} is false for $k=2$ and every
$d\geq4$.
\end{corollary}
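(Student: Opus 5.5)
The corollary follows by specializing Theorem~\ref{thm:main} to $d=4$. Two things remain to be checked: that the constant in \eqref{eq:Cd} reduces to the displayed Euler product, and that the extra logarithm rules out \eqref{P2eq}.

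\emph{Step 1: compute $\alpha_4$.} For $d=4$, \eqref{eq:polytope} reads
\[
y_1+2y_2+3y_3=1,\qquad 3y_1+2y_2+y_3=1,\qquad y_i\ge0.
\]
Subtracting the two equations gives $y_1=y_3$, and then $4y_1+2y_2=1$. So $\cP_4$ is the segment joining $(0,\tfrac12,0)$ and $(\tfrac14,0,\tfrac14)$. Its direction vector is $(\tfrac14,-\tfrac12,\tfrac14)$, so $\cH^1(\cP_4)=\sqrt{6}/4$. Next, $J_4=4^{3/2}\cdot 3\cdot\sqrt{2/12}=24/\sqrt6$. By \eqref{eq:alpha},
\[
\alpha_4=\frac{\sqrt6}{24}\cdot 16\cdot\frac{\sqrt6}{4}=1,
\]
which confirms the value quoted before Corollary~\ref{cor:d3}.

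\emph{Step 2: expand the Euler factor.} With $q=3$ in \eqref{eq:Delta}, the local factor is
\[
\Bigl(1-\tfrac1p\Bigr)^3\Bigl(1+\tfrac3p\Bigr)
=\Bigl(1-\tfrac3p+\tfrac3{p^2}-\tfrac1{p^3}\Bigr)\Bigl(1+\tfrac3p\Bigr)
=1-\frac6{p^2}+\frac8{p^3}-\frac3{p^4}.
\]
The $1/p$ terms cancel, so the product converges absolutely. Hence \eqref{eq:Cd} gives $C_4=\frac{\pi^2}{12}\prod_p\bigl(1-6p^{-2}+8p^{-3}-3p^{-4}\bigr)$. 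The asymptotic $n-F_{2,4}(n)\sim n-f_{2,4}(n)\sim C_4\sqrt n\log n$ is then exactly \eqref{eq:main} with $n^{2/4}(\log n)^{1}$.

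For the decimal value, I would compute the product over small primes directly. For the tail, I would factor out $\zeta(2)^{-6}$-type corrections, or simply bound it using $\sum_{p>P}p^{-2}\ll 1/(P\log P)$; this suffices for seven digits. This numerical step is the only part that is not purely formal, and it is routine.

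\emph{Step 3: the negative answer for general $d\ge4$.} Fix $d\ge4$. By the positivity remarks after \eqref{eq:alpha}, $\alpha_d>0$, and $\Delta_{d-1}>0$; hence $C_d>0$. Theorem~\ref{thm:main} then gives
\[
\frac{n-F_{2,d}(n)}{n^{2/d}}\sim C_d(\log n)^{d-3}\to\infty,
\]
since $d-3\ge1$. The same holds for $n-f_{2,d}(n)$. Therefore the upper bound $n-f_{2,d}(n)\ll n^{2/d}$ in \eqref{P2eq} fails for $k=2$ and every $d\ge4$.
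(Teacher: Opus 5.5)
Your proposal is correct and is the paper's own argument: identify $\cP_4$ as the segment from $(0,\tfrac12,0)$ to $(\tfrac14,0,\tfrac14)$ of length $\sqrt6/4$, use $J_4=24/\sqrt6=4\sqrt6$ to get $\alpha_4=1$, expand the local factor of $\Delta_3$, and substitute into Theorem~\ref{thm:main}. Your Step~3 (positivity of $C_d$ together with $(\log n)^{d-3}\to\infty$) and your numerical plan spell out what the paper leaves implicit, and the value $0.0944883\ldots$ does check out.
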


\section{Complementary kernels and exact formulas}

For $a\in\mathbb{N}$, define its $d$-free kernel by
\[
 \kappa_d(a):=\prod_p p^{v_p(a)\bmod d},
\]
where $v_p(a)\bmod d$ means the residue of $v_p(a)$ modulo $d$, belonging to $\{0,1,\ldots,d-1\}$.
Thus $a=\kappa_d(a)t^d$ uniquely, where $\kappa_d(a)$ is $d$-free.
If $r$ is $d$-free, define its complementary kernel $r^*$ by
\begin{align}
 v_p(r^*)=
 \begin{cases}
 0,&v_p(r)=0,\\
 d-v_p(r),&1\leq v_p(r)\leq d-1.
 \end{cases}
\label{eq:star}
\end{align}
Clearly $(r^*)^*=r$.

\begin{lemma}\label{lem:kernelcriterion}
For positive integers $a$ and $b$, the product $ab$ is a perfect
$d$-th power if and only if
\[
 \kappa_d(b)=\kappa_d(a)^*.
\]
\end{lemma}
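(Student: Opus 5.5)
We argue one prime at a time. A positive integer $m$ is a perfect $d$-th power exactly when $d\mid v_p(m)$ for every prime $p$. Since $v_p(ab)=v_p(a)+v_p(b)$, and $v_p(a)\equiv v_p(\kappa_d(a))\pmod d$ by the definition of the kernel (and likewise for $b$), the product $ab$ is a $d$-th power if and only if
\[
 v_p(\kappa_d(a))+v_p(\kappa_d(b))\equiv 0\pmod d\qquad\text{for every prime }p.
\]
The whole task is therefore to show that this congruence condition is equivalent to $v_p(\kappa_d(b))=v_p(\kappa_d(a)^*)$ for all $p$, which by unique factorization is the same as $\kappa_d(b)=\kappa_d(a)^*$.

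Fix $p$ and put $r=\kappa_d(a)$, $u=v_p(r)$ and $w=v_p(\kappa_d(b))$. Both $u$ and $w$ lie in $\{0,1,\ldots,d-1\}$ because kernels are $d$-free. The condition $u+w\equiv0\pmod d$ then forces a unique value of $w$. If $u=0$, then $w\equiv 0$ and $0\le w\le d-1$ give $w=0$. If $1\le u\le d-1$, then $w\equiv d-u\pmod d$, and since $d-u\in\{1,\ldots,d-1\}$ we get $w=d-u$. Comparing with \eqref{eq:star}, in both cases this says exactly $w=v_p(r^*)$. Conversely, if $w=v_p(r^*)$, then $u+w\in\{0,d\}$, so the congruence holds.

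Both directions are therefore established prime by prime, and the lemma follows. There is no real obstacle. The only care needed is to use the normalization $0\le v_p(\cdot)\le d-1$ for kernels, which turns the congruence into an equality, and to note that $r^*$ is itself $d$-free, so it is a legitimate value for $\kappa_d(b)$.
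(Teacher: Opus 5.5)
Your proof is correct and is essentially the paper's argument: both reduce to the prime-by-prime condition $d\mid v_p(a)+v_p(b)$ and observe that, for residues in $\{0,\ldots,d-1\}$, this means that either both residues are $0$ or they are nonzero and sum to $d$, which is exactly the complementarity in \eqref{eq:star}. You simply spell out in more detail the residue normalization and the appeal to unique factorization, which the paper leaves implicit.
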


\begin{proof}
For every prime $p$, the condition that $v_p(a)+v_p(b)$ be divisible
by $d$ says precisely that the two residues modulo $d$ are either both
zero, or are nonzero and sum to $d$.  This is exactly complementarity
in the sense of \eqref{eq:star}, and the condition for all primes is
equivalent to the assertion.
\end{proof}

For a $d$-free integer $r$, let
\begin{align}
 \cC_r(n):=\{rt^d:t\in\mathbb{Z}^{+},\ rt^d\leq n\},
 \qquad
 c_r(n):=|\cC_r(n)|
 =\left\lfloor\left(\frac nr\right)^{1/d}\right\rfloor.\label{eq:classes}
\end{align}
If $r\neq r^*$, Lemma \ref{lem:kernelcriterion} shows that the
forbidden-pair graph induced by
$\cC_r(n)\cup\cC_{r^*}(n)$ is complete bipartite.  Consequently, an
extremal set takes the whole larger class and omits the smaller one;
if the two classes have the same size, either side may be chosen.
Its contribution to either complement is
\begin{align*}
 \min\{c_r(n),c_{r^*}(n)\}
 =\left\lfloor
 \left(\frac{n}{\max(r,r^*)}\right)^{1/d}
 \right\rfloor.
\end{align*}
If $r=r^*$, the distinct members of $\cC_r(n)$ form a clique in the
forbidden-pair graph for $F_{2,d}$, while every member also has a loop
in the repetition-allowing problem defining $f_{2,d}$.  Thus an
$f_{2,d}$-set takes none of this class, whereas an $F_{2,d}$-set may
take one member.  Different orbits do not interact, so all these local
choices can be made independently.

Let
\begin{align}
 K_d(X):=\#\{r:r\text{ is $d$-free},\ r\leq X,\ r^*\leq X\}
\label{eq:Kddef}
\end{align}
and let
\begin{align}
 E_d(X):=\#\{r:r\text{ is $d$-free},\ r=r^*,\ r\leq X\}.
\label{eq:Eddef}
\end{align}

\begin{proposition}[Exact formulas]\label{prop:exact}
For $d\geq3$,
\begin{align}
 n-f_{2,d}(n)
 =\frac12\sum_{t\leq n^{1/d}}
 \left\{
 K_d\left(\frac{n}{t^d}\right)
 +E_d\left(\frac{n}{t^d}\right)
 \right\},
\label{eq:exactf}
\end{align}
and
\begin{align}
 n-F_{2,d}(n)
 =\frac12\sum_{t\leq n^{1/d}}
 \left\{
 K_d\left(\frac{n}{t^d}\right)
 +E_d\left(\frac{n}{t^d}\right)
 \right\}-E_d(n).
\label{eq:exactF}
\end{align}
Furthermore,
\begin{align}
 E_d(X)=
 \begin{cases}
 1,&d\text{ odd and }X\geq1,\\[1mm]
 \displaystyle\sum_{s\leq X^{2/d}}\mu^2(s),&d\text{ even}.
\end{cases}
\tag{2.8}\label{eq:Edevaluation}
\end{align}
As usual, $1$ is regarded as squarefree.
\end{proposition}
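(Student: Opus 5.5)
The plan is to use the orbit decomposition described just before the proposition. The classes $\cC_r(n)$ partition $[n]$ as $r$ ranges over $d$-free integers, and by Lemma \ref{lem:kernelcriterion} forbidden pairs occur only inside $\cC_r(n)\cup\cC_{r^*}(n)$. Hence $n-f_{2,d}(n)$ is a sum of local deficits. For $r\neq r^*$, each unordered pair $\{r,r^*\}$ contributes $\min\{c_r(n),c_{r^*}(n)\}$. For $r=r^*$, the class contributes $c_r(n)$ to the $f$-problem, since every element has a loop, and $c_r(n)-\1_{c_r(n)\geq1}$ to the $F$-problem, since the class is a clique. So I will write
\[
 n-f_{2,d}(n)=\sum_{\{r,r^*\},\,r\neq r^*}\min\{c_r,c_{r^*}\}+\sum_{r=r^*}c_r(n),
\]
and $n-F_{2,d}(n)$ equals the same expression minus $\#\{r=r^*: r\leq n\}=E_d(n)$.

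Next I convert each term to a count over $t$. Note that $\min\{c_r,c_{r^*}\}=\#\{t\geq1: t^d\max(r,r^*)\leq n\}$, which equals the number of $t\leq n^{1/d}$ with $r\leq n/t^d$ and $r^*\leq n/t^d$. Summing over unordered pairs is half the sum over ordered $r\neq r^*$. Interchanging the order of summation, the non-self-dual part becomes
\[
 \tfrac12\sum_{t\leq n^{1/d}}\bigl(K_d(n/t^d)-E_d(n/t^d)\bigr),
\]
because among the $r$ counted by $K_d(X)$ the self-dual ones are exactly those counted by $E_d(X)$; for these $r=r^*\leq X$ is automatic. Similarly, the self-dual part is $\sum_{t}E_d(n/t^d)$. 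Adding the two gives \eqref{eq:exactf}, and subtracting $E_d(n)$ gives \eqref{eq:exactF}.

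For \eqref{eq:Edevaluation}, the condition $r=r^*$ means $v_p(r)\in\{0\}$ or $v_p(r)=d-v_p(r)$, that is, $v_p(r)\in\{0,d/2\}$ for every $p$. If $d$ is odd, only $r=1$ qualifies, so $E_d(X)=1$ for $X\geq1$. If $d$ is even, the self-dual $r$ are exactly $r=s^{d/2}$ with $s$ squarefree, and $s^{d/2}\leq X$ is equivalent to $s\leq X^{2/d}$, which gives the Möbius sum.

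No step is a real obstacle; the only care needed is the bookkeeping of ordered versus unordered pairs and the self-dual correction. The precise point to check is that the self-dual classes for the $F$-problem lose exactly one element when nonempty, and that $\cC_r(n)$ is nonempty precisely when $r\leq n$.
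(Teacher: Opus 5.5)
Your proposal is correct and follows essentially the same route as the paper. Both arguments decompose $[n]$ into the classes $\cC_r(n)$, count the omitted elements orbit by orbit, interchange the sum over $r$ with the sum over $t$, and use the involution $r\mapsto r^*$ to produce the factor $\tfrac12$ and the $E_d$ terms. The only difference is cosmetic: the paper writes the complement as $\sum_{r\geq r^*}c_r(n)$ and symmetrizes the set $\{r^*\le r\le X\}$, whereas you halve the ordered sum of $\min\{c_r,c_{r^*}\}$ and add the self-dual classes separately.
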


\begin{proof}Throughout the proof of this lemma, the letter $r$ denotes $d$-free number. The arguments below \eqref{eq:classes} give
$$f_{2,d}(n)=\sum_{r< r^\ast}|c_r(n)|.$$
Then 
$$n-f_{2,d}(n)=\sum_{r\geq r^\ast}|c_r(n)|=\sum_{t\leq n^{1/d}}\sum_{r^\ast\leq r\leq \frac{n}{t^d}}1.$$
Since the number of $d$-free numbers $r$ satisfying $r^\ast\leq r\leq X$ is equal to the number of $d$-free numbers $r$ satisfying $r\leq r^\ast\leq X$, we conclude that
the number of $d$-free numbers $r$ satisfying $r^\ast\leq r\leq X$ is equal to $\frac{1}{2}(K_d(X)+E_d(X))$. This proves \eqref{eq:exactf}.

For
$F_{2,d}$, one (and only) element can be retained from each set $ \cC_r(n)$ where $r$ satisfies $r=r^*$. This proves \eqref{eq:exactF}. 

Clearly $r=r^\ast>1$ implies that $d$ is even. Thus $E_d(X)=1$ if $d$ is odd.
If $d$ is even, $E_d(X)$ is exact the number of squrefree positive integers no more that $X^{2/d}$. This proves
\eqref{eq:Edevaluation}.
\end{proof}

Every $d$-free kernel has a unique factorization
\begin{align}
 r(\bsu)=\prod_{j=1}^{d-1}u_j^j,
\label{eq:factorization}
\end{align}
where $u_1,\ldots,u_{d-1}$ are pairwise coprime and squarefree, with
the convention that $1$ is squarefree.  Under this parametrization,
\begin{align}
 r(\bsu)^*=\prod_{j=1}^{d-1}u_j^{d-j}.
\label{eq:oppositefactorization}
\end{align}
Thus $K_d(X)$ is a lattice-point count with two monomial height
conditions.  The next two sections determine its asymptotics.

\section{Squarefree tuples in multiplicative boxes}

For $q\geq2$, let $\mathcal S_q$ denote the set of tuples
$(u_1,\ldots,u_q)\in\N^q$ for which the coordinates are squarefree and
pairwise coprime.  Equivalently, $u_1\cdots u_q$, namely the product of $u_1,\ldots,u_q$, is squarefree. The following is essentially an application of a squarefree sieve in multiplicative boxes. 

\begin{lemma}\label{lem:boxsieve}
Fix $q\geq2$ and $\lambda>1$.  Uniformly as
$\min_i A_i\to\infty$,
\begin{align*}
 \#\left(
 \mathcal S_q\cap\prod_{i=1}^q(A_i,\lambda A_i]
 \right)
 =\left(\Delta_q+o_{q,\lambda}(1)\right)
 \prod_{i=1}^q\#\bigl((A_i,\lambda A_i]\cap\N\bigr),
\end{align*}
where $\Delta_q$ is defined in \eqref{eq:Delta}.
\end{lemma}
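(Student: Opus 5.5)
The approach is to detect the condition that $u_1\cdots u_q$ is squarefree by M\"obius inversion over square divisors. This reduces the problem to counting lattice points in sublattices of the box. Write $N_i=\#((A_i,\lambda A_i]\cap\N)\asymp A_i$. The tuple $(u_1,\ldots,u_q)$ lies in $\mathcal S_q$ exactly when no prime $p$ satisfies one of two conditions: either $p^2\mid u_i$ for some $i$, or $p\mid u_i$ and $p\mid u_j$ for some $i\ne j$. For each prime $p$, the set of residue classes of $\bsu \bmod p^2$ that are allowed at $p$ has density
\[
\delta_p=\Bigl(1-\tfrac1p\Bigr)^q+q\cdot\tfrac1p\Bigl(1-\tfrac1p\Bigr)^{q-1}\Bigl(1-\tfrac1p\Bigr)
=\Bigl(1-\tfrac1p\Bigr)^q\Bigl(1+\tfrac{q}{p}-\tfrac{q}{p}\Bigr)\!\!\!\!\!\!\!\!\quad
\]
This needs care: if exactly one coordinate is divisible by $p$ and not by $p^2$, the local density is $\frac1p-\frac1{p^2}=\frac1p(1-\frac1p)$. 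Hence
\[
\delta_p=(1-\tfrac1p)^q+q\,\tfrac1p(1-\tfrac1p)(1-\tfrac1p)^{q-1}=(1-\tfrac1p)^q(1+\tfrac qp),
\]
which matches \eqref{eq:Delta}, and $\prod_p\delta_p=\Delta_q$. Since $\delta_p=1-O(p^{-2})$, the product converges.

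The first step is to fix a truncation parameter $z$. Let $\mathcal S_q^{(z)}$ be the set of tuples that are allowed at every prime $p\le z$. This is a union of residue classes modulo $M=\prod_{p\le z}p^2$, and by CRT its density is $\prod_{p\le z}\delta_p$. Each coordinate interval has length $N_i\to\infty$, so for fixed $z$ the count of $\mathcal S_q^{(z)}$ in the box equals $\bigl(\prod_{p\le z}\delta_p+o(1)\bigr)\prod_i N_i$ as $\min A_i\to\infty$. This is because each interval $(A_i,\lambda A_i]$ is a union of complete periods mod $M$ plus at most one partial period, and the error is $O(M\sum_i\prod_{j\ne i}N_j)$. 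The upper bound then follows from $\mathcal S_q\subseteq\mathcal S_q^{(z)}$ by letting $z\to\infty$.

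The second step, the lower bound, is the main obstacle. We must show that the number of tuples in the box that fail at some prime $p>z$ is at most $\eps(z)\prod N_i$, with $\eps(z)\to0$, uniformly in the $A_i$. A failure at $p$ occurs in one of two ways. The first is $p^2\mid u_i$. For $p\le\sqrt{\lambda A_i}$ this gives at most $(N_i/p^2+1)\prod_{j\ne i}N_j$ tuples, and primes $p>\sqrt{\lambda A_i}$ cannot occur. Summing over $z<p\le\sqrt{\lambda A_i}$ gives $\prod N_j\bigl(\sum_{p>z}p^{-2}+O(\sqrt{A_i}/(A_i\log A_i))\bigr)$. The second is $p\mid u_i$ and $p\mid u_j$. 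For $p\le\min(\lambda A_i,\lambda A_j)$ this gives at most $(N_i/p+1)(N_j/p+1)\prod_{k\ne i,j}N_k$ tuples. Expanding, the $N_iN_j/p^2$ term sums to $o_z(1)$. The cross terms $N_i/p$ summed over $p\le\lambda A_j$ are the delicate part, since they give $N_i\cdot O(\log\log A_j)$. This must be compared with $N_iN_j$, and it is $o(N_iN_j)$ because $\log\log A_j=o(A_j)$. Likewise the constant term contributes $\pi(\lambda\min(A_i,A_j))=o(N_iN_j)$. All these errors are $o(1)\prod N_k$ uniformly as $\min A_i\to\infty$, because they depend only on the individual $A_i$ tending to infinity.

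Combining the two steps, we first choose $z$ large enough that $\prod_{p\le z}\delta_p$ is within $\eps$ of $\Delta_q$ and $\sum_{p>z}p^{-2}<\eps$. Then we let $\min A_i\to\infty$. This yields the claimed asymptotic with an $o_{q,\lambda}(1)$ error.
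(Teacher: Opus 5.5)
Your proof is correct, but it takes a different route from the paper.

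The paper detects squarefreeness of $u_1\cdots u_q$ through $\sum_{t^2\mid u_1\cdots u_q}\mu(t)$, with a cutoff $t\le D=A_*^{1/3}$ that grows with $A_*=\min_iA_i$. For $t\le D$ it counts residue classes modulo $t^2$ using the multiplicative function $\rho(t^2)$, whose Euler product is $\Delta_q$. For $t>D$ it uses $\#\{\bsu\text{ in the box}: t^2\mid u_1\cdots u_q\}\ll t^{-2+o(1)}A_1\cdots A_q$, which comes from the bound $t^{o(1)}$ on the number of factorizations $t_1\cdots t_q=t^2$.

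You instead sieve with the fixed modulus $M=\prod_{p\le z}p^2$ and count admissible classes by CRT and complete periods. The upper bound then comes for free from $\mathcal S_q\subseteq\mathcal S_q^{(z)}$. You control primes $p>z$ by a union bound over the two failure events $p^2\mid u_i$ and $p\mid u_i,\,p\mid u_j$. Because each event involves at most two coordinates, the boundary terms reduce to $\pi(\sqrt{\lambda A_i})$, $N_i\sum_{p\le\lambda A_j}1/p\ll N_i\log\log A_j$, and $\pi(\lambda\min(A_i,A_j))$. Each of these is $o(N_i)$ or $o(N_iN_j)$ uniformly in $\min_iA_i$.

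What each approach buys:
\begin{itemize}
\item Your argument is more elementary: it needs neither the divisor bound nor the multiplicativity of $\rho$.
\item As written, it is purely qualitative. Since $M=e^{(2+o(1))z}$, making it effective forces $z\ll\log A_*$, which gives only a logarithmic rate.
\item The paper's proof yields the power saving $O(A_*^{-1/3+o(1)})$.
\end{itemize}
This makes no difference downstream. The proof of Proposition~\ref{prop:Kd} only uses a uniform $o(1)$ as $\min_iA_i\to\infty$, since all boxes there have $\min_iA_i\ge X^\eps/\lambda$.

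Finally, delete your first display for $\delta_p$, the one with the factor $1+\frac qp-\frac qp$. It is wrong; the corrected computation that follows it is the right one.
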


\begin{proof} Since $\sum_{t^2\mid u}\mu(t)=1$ or $0$ according to $u$ is squarefree or not, we have
\begin{align}\label{bound0}\#\left(
 \mathcal S_q\cap\prod_{i=1}^q(A_i,\lambda A_i]
 \right)=\sum_{t}\mu(t)\sum_{\substack{t^2\mid u_1\cdots u_q \\ u_i\in (A_i,\lambda A_i]}}1.\end{align}
The letter $t$ will denote squarefree numbers. We introduce the set 
$$B(t_1,\ldots,t_q)=\{(u_1,\ldots,u_q)\in \prod_{i=1}^q\bigl((A_i,\lambda A_i]\cap\N\bigr):\ t_i\mid u_i ~(1\leq i\leq q)\},$$
 and we have
$\# B(t_1,\ldots,t_q) \leq \frac{1}{t_1\cdots t_q}\prod_{i=1}^q(\lambda A_i)$. 
If $t^2\mid u_1\cdots u_q$, then there exists $(t_1,\ldots,t_q)\in \N^q$ such that $t_1\cdots t_q=t^2$ and $(u_1,\ldots,u_q)\in B(t_1,\ldots,t_q)$. It is well-known that $\#\{(t_1,\ldots,t_q)\in \N^q:\ t_1\cdots t_q=t^2\}\ll t^{o(1)}$. Therefore, 
\begin{align}\label{bound1}\sum_{\substack{t^2\mid u_1\cdots u_q \\ u_i\in (A_i,\lambda A_i]}}1\ll_{q,\lambda} \frac{1}{t^{2-o(1)}}A_1\cdots A_q.\end{align}

Let $A_*:=\min_iA_i$ and let $D=(A_\ast)^{1/3}$. We introduce the set 
$$C(v_1,\ldots,v_q)=\{(u_1,\ldots,u_q)\in \prod_{i=1}^q\bigl((A_i,\lambda A_i]\cap\N\bigr):\ u_i\equiv v_i\bmod {t^2} ~(1\leq i\leq q)\},$$
For $t\leq D$, we have 
\begin{align*}\#\{u\in (A_i,\lambda A_i): u\equiv v\bmod {t^2}\}=&\, \frac{1}{t^2}\#\bigl((A_i,\lambda A_i]\cap\N\bigr)+O(1)
\\ =&\,\frac{1}{t^2}\#\bigl((A_i,\lambda A_i]\cap\N\bigr)\big(1+O_{\lambda}(D^{-1})\big),\end{align*}
and therefore,
$\# C(v_1,\ldots,v_q) =\frac{1}{t^{2q}} \prod_{i=1}^q\#\bigl((A_i,\lambda A_i]\cap\N\bigr)\big(1+O_{q,\lambda}(D^{-1})\big)$. 
Let 
$$\rho(t^2)=\#\{(v_1,\ldots,v_q)\in \N^q: 1\leq v_i\leq t^2(1\leq i\leq q)\ \textrm{ and }\ t^2\mid v_1\cdots v_q\}.$$
Then we have 
$$\sum_{t\leq D}\mu(t)\sum_{\substack{t^2\mid u_1\cdots u_q \\ u_i\in (A_i,\lambda A_i]}}1=\prod_{i=1}^q\#\bigl((A_i,\lambda A_i]\cap\N\bigr)\Big(\sum_{t\leq D}\frac{\mu(t)\rho(t^2)}{t^{2q}}+O_{q,\lambda}(D^{-1})\sum_{t\leq D}\frac{\rho(t^2)}{t^{2q}}\Big).$$
Note that $\rho(t^2)$ is multiplicative in the sense that $\rho(p_1^2\cdots p_j^2)=\rho(p_1^2)\cdots \rho(p_j^2)$ for distinct primes $p_1,\ldots,p_j$. 
For a prime $p$, we have 
$$\rho(p^2)=p^{2q}-(p^2-p)^q-q(p-1)(p^2-p)^{q-1}\ll_{q}p^{2q-2}.$$
Thus, $\rho(t^2)=t^{2q-2+o_{q}(1)}$. Now we obtain
\begin{align}\label{bound2}\sum_{t\leq D}\mu(t)\sum_{\substack{t^2\mid u_1\cdots u_q \\ u_i\in (A_i,\lambda A_i]}}1=\prod_{i=1}^q\#\bigl((A_i,\lambda A_i]\cap\N\bigr)
\times\Big(\sum_{t=1}^\infty\frac{\mu(t)\rho(t^2)}{t^{2q}}+O_{q,\lambda}(D^{-1+o(1)})\Big).\end{align}

We observe that $\sum_{t=1}^\infty\frac{\mu(t)\rho(t^2)}{t^{2q}}=\prod_{p}(1-\frac{\rho(p^2)}{p^{2q}})$ and 
\begin{align*} 1-\frac{\rho(p^2)}{p^{2q}}=
 \left(1-\frac1p\right)^q
 +q\left(\frac1p-\frac1{p^2}\right)
       \left(1-\frac1p\right)^{q-1}
 =\left(1-\frac1p\right)^q\left(1+\frac qp\right).
\end{align*}
In particular $\sum_{t=1}^\infty\frac{\mu(t)\rho(t^2)}{t^{2q}}=\Delta_q>0$.

When $t>D$, we use the bound \eqref{bound1}. Now combining \eqref{bound0}, \eqref{bound1} and \eqref{bound2}, we complete the proof of the lemma.
\end{proof}

We will also need a uniform upper bound, which will be used in the dyadic argument. 
\begin{lemma}\label{lem:dyadicH}
Let $d\geq3$ and $X\geq2$. Let $H\geq 0$ be a nonnegative integer. Write $L_0=\log_2X$ and $M=\lfloor2L_0/d\rfloor$.
\begin{align}
 \sum_{\substack{a_1,\ldots,a_{d-1}\geq0\\
 \sum j a_j\leq L_0\\
 \sum(d-j)a_j\leq L_0
 \\ M-(a_1+\cdots +a_{d-1})\geq H}}
 2^{a_1+\cdots+a_{d-1}}
 \ll_d X^{2/d}(1+\log X)^{d-3}
       \sum_{h\geq H}2^{-h}(h+1).
\label{eq:dyadicsum}
\end{align}
\end{lemma}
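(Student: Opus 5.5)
The plan is to group the summands according to the value of $s:=a_1+\cdots+a_{d-1}$ and to count the lattice points in each layer directly. First note that adding the two height constraints gives $\sum_j\bigl(j+(d-j)\bigr)a_j=ds\leq 2L_0$. Hence $s\leq M$ automatically, and $h:=M-s$ is a nonnegative integer; the last summation condition says exactly $h\geq H$. On the layer $s=M-h$ the weight is
\[
2^{s}=2^{M}2^{-h}\leq 2^{2L_0/d}\,2^{-h}=X^{2/d}2^{-h}.
\]
So \eqref{eq:dyadicsum} will follow once we show that the number $N(s)$ of tuples in the layer $a_1+\cdots+a_{d-1}=s$ that satisfy both height constraints is $\ll_d (h+1)(1+\log X)^{d-3}$.

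To bound $N(s)$, introduce the second statistic $w:=\sum_j j a_j$. On the layer we have $\sum_j(d-j)a_j=ds-w$. The two height constraints therefore become $ds-L_0\leq w\leq L_0$, which is an interval of length
\[
2L_0-ds=d(M-s)+(2L_0-dM)<d(h+1),
\]
since $2L_0-dM<d$ by the definition of $M$. Thus $w$ takes at most $d(h+1)+1\ll_d h+1$ integer values.

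Next, for fixed $(s,w)$ we count the tuples $\bsy$ with $a_1+\cdots+a_{d-1}=s$ and $\sum_j j a_j=w$. We choose $a_3,\ldots,a_{d-1}$ freely; each lies in $[0,s]$, so there are at most $(s+1)^{d-3}$ choices. The remaining pair $(a_1,a_2)$ is then uniquely determined by the nonsingular system
\[
a_1+a_2=s',\qquad a_1+2a_2=w',
\]
where $s'$ and $w'$ are what remains of $s$ and $w$ after subtracting the contributions of $a_3,\ldots,a_{d-1}$. For $d=3$ the free choice is empty and the count is at most $1$. Since $s\leq M\leq 2L_0/d\ll 1+\log X$, the number of tuples for fixed $(s,w)$ is $\ll_d(1+\log X)^{d-3}$, and multiplying by the $\ll_d h+1$ values of $w$ gives $N(s)\ll_d (h+1)(1+\log X)^{d-3}$.

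Summing $X^{2/d}2^{-h}N(M-h)$ over $h\geq H$ (the sum is empty if $H>M$) yields \eqref{eq:dyadicsum}. The only delicate point is the observation that fixing $s$ confines $w$ to an interval of length $O(h+1)$ rather than $O(\log X)$. This is exactly where the two height constraints interact, and it is what produces the factor $(h+1)$ in place of an extra power of $\log X$. Everything else is bookkeeping with implied constants depending only on $d$.
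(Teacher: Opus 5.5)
Your proposal is correct and follows essentially the same route as the paper. You group by $s=a_1+\cdots+a_{d-1}$, observe that fixing $s$ confines $W=\sum_j ja_j$ to $[ds-L_0,L_0]$, which has $O_d(h+1)$ integer points with $h=M-s$, and bound the tuples with fixed $(s,W)$ by $(s+1)^{d-3}$ because the last two coordinates are then determined. Your write-up also makes explicit a few steps the paper leaves implicit: $s\le M$, $2^M\le X^{2/d}$, and $M+1\ll_d 1+\log X$.
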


\begin{proof}
Put $s=a_1+\cdots+a_{d-1}$, and $W=\sum ja_j$.
The two inequalities (involving $L_0$) imply
\begin{align}
 d s-L_0\leq W\leq L_0,
 \qquad s\leq\frac{2L_0}{d}.
\label{eq:dyadicconstraints}
\end{align}
For fixed integers $s$ and $W$, the number of exponent vectors is
$O_d((s+1)^{d-3})$: after $d-3$ coordinates have been chosen, the
remaining two are determined by the two linear equations.  Write $h=M-s$, the interval for $W$ in
\eqref{eq:dyadicconstraints} has at most $d(h+1)$ integer points.  Hence the
left side of \eqref{eq:dyadicsum} is at most
\[
 \ll_d\sum_{s=0}^{M-H}2^s(M-s+1)(s+1)^{d-3}
 \ll_d2^M(M+1)^{d-3}
       \sum_{h\geq H}2^{-h}(h+1),
\]
which proves the assertion.
\end{proof}

In the special case $H=0$, we obtain the following.
\begin{lemma}\label{lem:dyadic}
For fixed $d\geq3$ and $X\geq 1$,
\[
 \sum_{\substack{a_1,\ldots,a_{d-1}\geq0\\
 \sum j a_j\leq\log_2X\\
 \sum(d-j)a_j\leq\log_2X}}
 2^{a_1+\cdots+a_{d-1}}
 \ll_d X^{2/d}(1+\log X)^{d-3}.
\]
\end{lemma}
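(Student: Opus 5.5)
This is the case $H=0$ of Lemma \ref{lem:dyadicH}, so the plan is to specialize that lemma, handle the range $1\leq X<2$ that it excludes, and check that the $H$-dependent factor becomes an absolute constant.

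For $X\geq2$, apply Lemma \ref{lem:dyadicH} with $H=0$. Put $s=a_1+\cdots+a_{d-1}$ and $M=\lfloor 2L_0/d\rfloor$ with $L_0=\log_2X$. Adding the two height constraints gives
\[
 ds=\sum_j\bigl(j+(d-j)\bigr)a_j\leq 2L_0 .
\]
Since $s$ is an integer, this forces $s\leq M$, that is, $M-s\geq 0=H$. Hence the extra summation condition in \eqref{eq:dyadicsum} is automatic when $H=0$, and the left-hand side of \eqref{eq:dyadicsum} is exactly the sum in the present statement.

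Lemma \ref{lem:dyadicH} then bounds this sum by
\[
 X^{2/d}(1+\log X)^{d-3}\sum_{h\geq0}2^{-h}(h+1).
\]
The series equals $4$, so it is absorbed into the implied constant depending on $d$.

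For $1\leq X<2$, we have $\log_2X<1$. Together with $j\geq1$ this forces every $a_j=0$, so the sum equals $1$. Since $X^{2/d}(1+\log X)^{d-3}\geq1$ on this range, the bound holds trivially with constant $1$.

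Everything here is a direct specialization, so I expect no real obstacle. The only step that needs attention is checking that the condition $M-s\geq H$ is vacuous at $H=0$, which is the integrality argument above.
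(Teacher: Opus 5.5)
Your proposal is correct and follows the paper's proof: apply Lemma~\ref{lem:dyadicH} with $H=0$ for $X\geq 2$, and treat $1\leq X<2$ directly, where only $a_j=0$ survives. Your check that $ds\leq 2L_0$ forces $s\leq M$, so that the extra constraint $M-s\geq H$ is vacuous, is a detail the paper leaves implicit. It is genuinely needed, since otherwise the specialized left-hand side would only bound a subsum of the sum in the statement.
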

\begin{proof}When $X\geq 2$, the desired bound follows from Lemma \ref{lem:dyadicH} by taking $H=0$. When $1\leq X\leq 2$, the desired bound holds obviously.\end{proof}

\section{The two-height region}

Let $q=d-1$ and define
\begin{align}
 \cR_d(X):=\left\{\boldsymbol x\in[1,\infty)^{d-1}:
 \prod_{j=1}^{d-1}x_j^j\leq X,
 \quad
 \prod_{j=1}^{d-1}x_j^{d-j}\leq X
 \right\}.\label{eq:defineRd}
\end{align}

\begin{lemma}[Volume of the two-height region]\label{lem:volume}
For every fixed $d\geq3$,
\begin{align}
 \vol(\cR_d(X))
 \sim\alpha_d\, X^{2/d}(\log X)^{d-3},
\label{eq:volumeasymp}
\end{align}
where $\alpha_d$ is defined in \eqref{eq:alpha}.
\end{lemma}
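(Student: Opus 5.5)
I will pass to logarithmic coordinates and then use a Laplace-type argument concentrated on the face $\cP_d$. Write $L=\log X$ and substitute $x_j=e^{Lz_j}$ with $z_j\ge0$. Then $dx_j=Le^{Lz_j}\,dz_j$, and
\[
 \vol(\cR_d(X))=L^{d-1}\int_{Q_d} e^{L(z_1+\cdots+z_{d-1})}\,d\boldsymbol z ,
\]
where
\[
 Q_d:=\Bigl\{\boldsymbol z\in\R_{\ge0}^{d-1}:\ \sum_j jz_j\le1,\ \sum_j(d-j)z_j\le1\Bigr\}.
\]
Adding the two constraints gives $d\sum_j z_j\le 2$. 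Hence $\sum_j z_j\le 2/d$ on $Q_d$, with equality exactly when both constraints are equalities, that is, exactly on $\cP_d$. This explains the factor $X^{2/d}$; the power of $L$ will come from the codimension-two nature of this maximizing face.

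Next I decompose $Q_d$ by the linear map $T(\boldsymbol z)=(\sum_j jz_j,\ \sum_j (d-j)z_j)$. Its two rows $a=(1,\dots,d-1)$ and $b=(d-1,\dots,1)$ are linearly independent. By the coarea formula for a linear surjection,
\[
 \int_{Q_d}g\,d\boldsymbol z=\frac1{\sqrt{\det(TT^{\top})}}\int_{\R^2}\int_{T^{-1}(s_1,s_2)\cap Q_d} g\,d\cH^{d-3}\,ds_1\,ds_2 .
\]
On the fiber over $(1-u,1-v)$ we have $\sum_j z_j=(2-u-v)/d$, so the integrand is the constant $e^{2L/d}e^{-L(u+v)/d}$ there. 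Writing $\phi(u,v):=\cH^{d-3}\bigl(T^{-1}(1-u,1-v)\cap\R^{d-1}_{\ge0}\bigr)$, this gives
\[
 \vol(\cR_d(X))=\frac{L^{d-1}X^{2/d}}{\sqrt{\det(TT^\top)}}\int_0^1\!\!\int_0^1 e^{-L(u+v)/d}\,\phi(u,v)\,du\,dv .
\]
I then identify $\sqrt{\det(TT^{\top})}=\sqrt{|a|^4-(a\cdot b)^2}$ with $J_d$, using $|a|^2=|b|^2=\sum_j j^2$ and $a\cdot b=\sum_j j(d-j)$. As a sanity check, for $d=3$ one gets $|\det T|=3=J_3$.

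Substituting $u=u'/L$ and $v=v'/L$ turns the double integral into
\[
 L^{-2}\int\!\!\int_{[0,L]^2} e^{-(u'+v')/d}\,\phi(u'/L,v'/L)\,du'\,dv' .
\]
If $\phi$ is bounded on $[0,1]^2$ and continuous at $(0,0)$ with $\phi(0,0)=\cH^{d-3}(\cP_d)$, then dominated convergence gives the limit
\[
 \cH^{d-3}(\cP_d)\int_0^\infty\!\!\int_0^\infty e^{-(u'+v')/d}\,du'\,dv'=d^2\,\cH^{d-3}(\cP_d).
\]
Altogether this yields
\[
 \vol(\cR_d(X))\sim J_d^{-1}d^2\,\cH^{d-3}(\cP_d)\,X^{2/d}L^{d-3}=\alpha_d X^{2/d}(\log X)^{d-3}.
\]

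The main obstacle is justifying the continuity of $\phi$ at $(0,0)$; boundedness is easy because all fibers lie in the bounded set $\{\boldsymbol z\ge0:\ \sum_j z_j\le 2/d\}$. The fibers are parallel $(d-3)$-dimensional sections of the orthant by affine planes. The section at $(0,0)$ contains the relative-interior point $y_j=2/(d(d-1))$, so the defining system is nondegenerate there. I would argue that the sections vary continuously in Hausdorff distance near $(0,0)$: each vertex of the fiber is a solution of a fixed nonsingular linear system whose right-hand side depends linearly on $(u,v)$. Continuity of the volume of convex bodies of fixed dimension in a fixed affine direction then gives the claim. In the case $d=3$ the fiber is a single point with counting measure $1$ for small $(u,v)$, so $\phi\equiv1$ near $(0,0)$ and the argument degenerates harmlessly.
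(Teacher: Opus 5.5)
Your proposal is correct and follows essentially the same route as the paper: logarithmic coordinates, the coarea formula for the linear map $T\boldsymbol{y}=(A\boldsymbol{y},B\boldsymbol{y})$ with Jacobian $J_d$, the rescaling $u=L(1-s)$, $v=L(1-t)$, and dominated convergence giving the limit $d^2\cH^{d-3}(\cP_d)$. Your justification of the limit (boundedness of the fiber measure plus continuity only at the single point $(1,1)$, via Hausdorff continuity of the fibers) is in fact slightly more careful than the paper's appeal to the box spline theorem on all of $[0,1]^2$. The fiber measure need not be continuous on the whole square; for $d=3$ it is the indicator of the cone $s/2\le t\le 2s$.
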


\begin{proof}
Write $L=\log X$ and $x_j=e^{z_j}$.  Then
\[
 \vol(\cR_d(X))
 =\int_{\substack{z_j\geq0\\
 \sum jz_j\leq L\\
 \sum(d-j)z_j\leq L}}
 e^{z_1+\cdots+z_{d-1}}\,d\bsz.
\]
After $z_j=Ly_j$, this becomes
\begin{align}\vol(\cR_d(X))=
 L^{d-1}\mathcal{I} \ \ \textrm{ with }\ \ \mathcal{I}=
 \int_{\substack{y_j\geq0\\A\bsy\leq1\\B\bsy\leq1}}
 e^{L(y_1+\cdots+y_{d-1})}\,d\bsy,
\label{eq:logintegral}
\end{align}
where
\[
 A=(1,2,\ldots,d-1),
 \qquad B=(d-1,d-2,\ldots,1).
\]
Since $A+B=d\1$, we have
\begin{align}
 y_1+\cdots+y_{d-1}=\frac{A\bsy+B\bsy}{d}.
\label{eq:sumheights}
\end{align}

We shall apply the coarea formula to the linear map
$T\bsy=(A\bsy,B\bsy)$.  The Jacobian matrix is
$\begin{pmatrix}
A \\ B
 \end{pmatrix}$, and every two by two submatrix is of rank $2$. Its Jacobian is
\begin{align}
 \sqrt{\det
 \begin{pmatrix}
 A\cdot A&A\cdot B\\B\cdot A&B\cdot B
 \end{pmatrix}}
 =d^{3/2}(d-1)\sqrt{\frac{d-2}{12}}=J_d.
\label{eq:Jcalculation}
\end{align}
Let
\[
 h(s,t):=\cH^{d-3}
 \{\bsy\in\R_{\geq0}^{d-1}:A\bsy=s,\ B\bsy=t\}.
\]
Note that $[0,1]^2$ is contained in the image of $T$. Indeed, $[0,1]^2\subseteq \{T\mathbf{y}:\ \mathbf{y}\in [0,1]^n\}$. Therefore, by the box spline theorem, the function $h(s,t)$ is continuous on $[0,1]^2$ for $d\geq 4$. In fact, for $d=3$, the map $T$ is invertible and $h\equiv1$ on $[0,1]^2$. Moreover, $h$ is understood to be zero if $s<0$ or $t<0$.

 Applying the coarea formula and using
\eqref{eq:sumheights}-\eqref{eq:Jcalculation}, we obtain
\begin{align*}\mathcal{I}=
 \frac1{J_d}\int_{s\leq1,\,t\leq1}
 e^{L(s+t)/d}h(s,t)\,ds\,dt.
\end{align*}
 On
the square make the substitution
\[
 u=L(1-s),\qquad v=L(1-t),
\]
and obtain 
\begin{align*}
 \mathcal{I}=&\,  \frac1{J_d}\cdot\frac{e^{2L/d}}{L^2}
 \int_0^{ L}\!\int_0^{ L}
 e^{-(u+v)/d}
 h\left(1-\frac{u}{L},1-\frac{v}{L}\right)\,du\,dv
 \\ =&\,\frac1{J_d}\cdot \frac{X^{2/d}}{L^2}
 \int_0^{ +\infty}\!\int_0^{ \infty}
 e^{-(u+v)/d}
 h\left(1-\frac{u}{L},1-\frac{v}{L}\right)\,du\,dv,
\end{align*}
where recall that $h$ is understood to be zero if $s<0$ or $t<0$. 

Now by \eqref{eq:logintegral}, we arrive at
$$\frac{\vol(\cR_d(X))}{ X^{2/d}L^{d-3}}=\frac{1}{J_d}\times \int_0^{ +\infty}\!\int_0^{ \infty}
 e^{-(u+v)/d}
 h\left(1-\frac{u}{L},1-\frac{v}{L}\right)\,du\,dv.$$
Dominated
convergence therefore shows that the double integration is asymptotic to
\[
h(1,1)
 \int_0^\infty\!\int_0^\infty e^{-(u+v)/d}\,du\,dv
 =d^2\,h(1,1)=d^2\, \cH^{d-3}(\cP_d),
\]
where $\cP_d$ is defined in \eqref{eq:polytope}.
Therefore, we conclude that
\[
 \vol(\cR_d(X))
 \sim
 L^{d-3}X^{2/d}\frac{d^2\,\cH^{d-3}(\cP_d)}{J_d},
\]
which is \eqref{eq:volumeasymp}. This completes the proof of the lemma.
\end{proof}

\begin{lemma}\label{lem:coordinateboundary}
Let $V_{d,\eps}(X)$ be the volume of the points in $\cR_d(X)$ for
which $\min_jx_j<X^\eps$, and let $N_{d,\eps}(X)$ be the number of
integer points with the same property.  Then
\begin{align}
 \lim_{\eps\rightarrow 0^{+}}\limsup_{X\to\infty}
 \frac{V_{d,\eps}(X)}{X^{2/d}(\log X)^{d-3}}
 =
 \lim_{\eps\rightarrow 0^{+}}\limsup_{X\to\infty}
 \frac{N_{d,\eps}(X)}
 {X^{2/d}(\log X)^{d-3}}=0.
 \label{eq:coordinateboundary}
\end{align}
\end{lemma}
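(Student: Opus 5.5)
By the union bound over the index $j_0$ realizing $\min_j x_j<X^\eps$, it suffices to fix $j_0\in\{1,\dots,d-1\}$ and bound the volume (respectively the number of integer points) of those points of $\cR_d(X)$ with $x_{j_0}<X^\eps$. The plan for both quantities is to pass to dyadic or logarithmic coordinates, where the condition becomes $a_{j_0}\le \eps\log_2X+1$ (resp.\ $z_{j_0}\le\eps L$), and to exploit the fact that the exponential weight $2^{a_1+\cdots+a_{d-1}}$ is maximized only on the face $\cP_d$. Every point of $\cP_d$ has all coordinates positive, so this face stays at positive distance from the hyperplanes $y_{j_0}=0$.

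\textbf{Integer points.} Cover $\cR_d(X)\cap\N^{d-1}$ by dyadic boxes $\prod_j[2^{a_j},2^{a_j+1})$ with $a_j\geq0$. Such a box meets $\cR_d(X)$ only if $\sum ja_j\le L_0$ and $\sum(d-j)a_j\le L_0$, with $L_0=\log_2X$, and it contains at most $2^{\sum a_j}$ integer points. So $N_{d,\eps}(X)$ is bounded by the sum in Lemma \ref{lem:dyadicH} restricted further to $a_{j_0}\le \eps L_0$. The key step is to show that on this restricted region $s=\sum a_j\le M-c_d L_0$ for some $c_d>0$ independent of $\eps$, once $\eps<\eps_0(d)$. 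Indeed, if $s\geq M-cL_0$ then $\bsy=\boldsymbol a/L_0$ satisfies $A\bsy\le1$, $B\bsy\le1$ and $(A\bsy+B\bsy)/d=s/L_0\geq 2/d-c$. Hence $\bsy$ is within $O_d(c)$ of $\cP_d$ in $\ell^1$, the polytope being compact so that a linear-programming stability argument applies. But $\min_{\bsy\in\cP_d}y_{j_0}>0$, and this contradicts $y_{j_0}\le\eps$ when both $c$ and $\eps$ are small. Granting this, Lemma \ref{lem:dyadicH} with $H=\lfloor c_d L_0\rfloor$ gives
\[
N_{d,\eps}(X)\ll_d X^{2/d}(1+\log X)^{d-3}\sum_{h\ge c_dL_0}2^{-h}(h+1),
\]
which is $o(X^{2/d}(\log X)^{d-3})$ for every $\eps<\eps_0$. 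The double limit is therefore $0$.

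I expect this stability step to be the main obstacle: that every $\bsy\ge0$ with $A\bsy\le1$, $B\bsy\le1$ and $\sum y_j\ge 2/d-c$ is $O(c)$-close to $\cP_d$. I would prove it directly rather than by abstract LP stability. Since $A\bsy+B\bsy=d\sum y_j\ge 2-dc$ and each of $A\bsy,B\bsy$ is at most $1$, both lie in $[1-dc,1]$. Then $y_{j_0}$ is small only if the point stays near the face, and I would rescale by subtracting a small multiple of the interior point $\bsy_0=(2/(d(d-1)))\1$. More simply, the minimum of $y_{j_0}$ over the polytope $\{\bsy\ge0:\ 1-\delta\le A\bsy\le1,\ 1-\delta\le B\bsy\le1\}$ is a continuous function of $\delta$ near $0$, by compactness and continuity of LP optima in the right-hand side. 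At $\delta=0$ it equals $\min_{\cP_d}y_{j_0}$. To see that this minimum is positive, note that $A\bsy=B\bsy=1$ forces $y_{j_0}>0$ only when no other point of $\cP_d$ has $y_{j_0}=0$. This may in fact fail: for $d=5$, taking $y_2=1/2$, $y_j=0$ otherwise is admissible. If the minimum can be $0$, I would instead use the coarea representation from Lemma \ref{lem:volume}. There the restricted volume equals the same double integral with $h$ replaced by $h_\eps$, the $\cH^{d-3}$-measure of the fiber intersected with $\{y_{j_0}\le\eps\}$. This fiber measure is $O(\eps)$ uniformly on $[1/2,1]^2$, since the fiber is a $(d-3)$-polytope of bounded diameter and its slab of width $\eps$ in a direction non-orthogonal to it has measure $O(\eps)$. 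The $(d-3)$-measure of a thin slab of a bounded polytope is $O(\eps)$ when $d\ge4$, while for $d=3$ the fiber is a single positive point and the slab is empty.

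\textbf{Volume.} With this fallback, dominated convergence in the proof of Lemma \ref{lem:volume} gives $\limsup V_{d,\eps}/(X^{2/d}L^{d-3})\ll_d \sup_{[1/2,1]^2} h_\eps+o(1)=O(\eps)$, and hence the limit $0$. For the integer points I would run the same bound with dyadic boxes: the count is bounded by the volume of the enlarged region $\cR_d(2^{d}X)$ restricted to $x_{j_0}<2X^\eps$. This transfers the estimate, since doubling each coordinate only changes $X$ by a constant factor.
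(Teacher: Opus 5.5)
Your proof is correct in substance, but you should commit to which half applies where. The key step ($a_{j_0}\le\eps L_0$ forces $s\le M-c_dL_0$) holds exactly for $d=3$, where $\cP_3=\{(1/3,1/3)\}$. There it is the paper's argument, which the paper makes explicit as $M-s\ge(1-3\eps)L_0/6-1$ before applying Lemma~\ref{lem:dyadicH}.

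For every $d\ge4$ the key step fails, because $\cP_d$ always contains points with $y_{j_0}=0$: put mass $2/d$ on one or two indices $\neq j_0$ with weighted mean $d/2$. Your $d=5$ witness is wrong, since $(0,\tfrac12,0,0)$ has $B\bsy=\tfrac32$. The point $(0,\tfrac15,\tfrac15,0)$ works instead, as does $(0,\tfrac12,0)$ for $d=4$. So for $d\ge4$ your proof is the fallback, and this is a genuinely different route from the paper's.

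The paper stays discrete. For fixed $s$ and $W$ it chooses the $d-3$ free coordinates to include $a_{j_0}$; this is possible because any two coordinates can be solved for, as $\det\begin{pmatrix}1&1\\ i&k\end{pmatrix}=k-i\neq0$. That replaces one factor $s+1$ by $\eps L_0+1$. Since each dyadic box has both volume and lattice-point count $2^s$, this bounds $V_{d,\eps}$ and $N_{d,\eps}$ at once by $O_d(\eps)X^{2/d}(\log X)^{d-3}+O_d(X^{2/d}(\log X)^{d-4})$.

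You instead use the coarea formula with $h_\eps\le C_d\eps$, then transfer to lattice points by enlarging the height. The underlying fact is the same in both: $y_{j_0}$ is nonconstant on each $(d-3)$-dimensional fiber, i.e.\ $e_{j_0}\notin\operatorname{span}(A,B)$. You assert this without proof. It holds because the entries of $\alpha\1+\beta A$ form an arithmetic progression of length $d-1\ge3$, so they cannot equal $e_{j_0}$. The paper's route is shorter and needs no transfer step. Yours gives the clean uniform bound $V_{d,\eps}(X)\le C_d\eps X^{2/d}(\log X)^{d-3}$ and reuses the machinery of Lemma~\ref{lem:volume}.

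Two small repairs are needed:
\begin{enumerate}
\item The enlargement factor in your transfer is $2^{d(d-1)/2}$, not $2^d$. With $Y=2^{d(d-1)/2}X$ you also need $2X^\eps\le Y^{2\eps}$ for large $X$, exactly as in the proof of Proposition~\ref{prop:Kd}.
\item For $d=3$ the slab is not empty on all of $[1/2,1]^2$: the fiber over $(1,\tfrac12)$ is $(0,\tfrac12)$. This is moot, because your dyadic argument bounds volume as well as lattice points and already settles $d=3$.
\end{enumerate}
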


\begin{proof}
Put $L_0=\log_2X$ and place $x_j$ in the dyadic interval
$[2^{a_j},2^{a_j+1})$.  As in Lemma \ref{lem:dyadicH}, write
\[
 s=\sum_{j=1}^{d-1}a_j, \qquad W=\sum_{j=1}^{d-1} ja_j, 
 \qquad M=\left\lfloor\frac{2L_0}{d}\right\rfloor.
\]
Every box meeting $\cR_d(X)$ has $s\leq M$.  Each such box
has volume exactly $2^s$ and contains exactly $2^s$ integer points.
Thus the same estimates below apply simultaneously to volume and to
lattice points.  If such a box meets the set
$\min_jx_j<X^\eps$, then $a_j<\eps L_0$ for at least one coordinate
$j$. As in the proof of Lemma \ref{lem:dyadicH}, for fixed integers $s$ and $W$, the number of exponent vectors is
now $O_d((\eps L_0+1)(s+1)^{d-4})$: after $d-3$ coordinates, including the coordinate $j$ with $a_j<\eps L_0$,  have been chosen, the
remaining two are determined by the two linear equations. Thus, we obtain the upper bound $\ll_d \eps X^{2/d}(1+\log X)^{d-3}$. This proves \eqref{eq:coordinateboundary} for $d\geq 4$.

For $d=3$, without loss of generality, we may assume that $a_1\leq \eps L_0$. we deduce that 
$M-(a_1+a_2)=M-\frac{a_1}{2}-\frac{1}{2}(a_1+2a_2)\geq M-\frac{\eps}{2}L_0-\frac{1}{2}L_0\geq \frac{1-3\eps}{6}L_0-1:=H$. Now by Lemma \ref{lem:dyadicH}, we obtain
$$V_{d,\eps}(X),\, N_{d,\eps}(X)\ll _d X^{2/d}(1+\log X)^{d-3}\sum_{h\geq H}\frac{h+1}{2^h}.$$
Note that $\sum_{h\geq H}\frac{h+1}{2^h}\rightarrow 0~(X\rightarrow +\infty)$. This proves \eqref{eq:coordinateboundary} for $d=3$. The proof of this lemma is complete.

\end{proof}

\begin{proposition}\label{prop:Kd}
For fixed $d\geq3$,
\begin{align}
 K_d(X)\sim
 \alpha_d\Delta_{d-1}X^{2/d}(\log X)^{d-3}.
\label{eq:Kdasymp}
\end{align}
Uniformly for $X\geq1$,
\begin{align}
 K_d(X)\ll_dX^{2/d}(1+\log X)^{d-3}.
\label{eq:Kdupper}
\end{align}
\end{proposition}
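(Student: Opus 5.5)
By \eqref{eq:factorization} and \eqref{eq:oppositefactorization}, $K_d(X)$ equals the number of tuples $\bsu\in\mathcal S_{d-1}$ with $\prod_j u_j^j\le X$ and $\prod_j u_j^{d-j}\le X$. In other words, it counts the points of $\mathcal S_{d-1}\cap\cR_d(X)$, where $\cR_d(X)$ is the region \eqref{eq:defineRd}. The upper bound \eqref{eq:Kdupper} is immediate from this description. Drop the squarefree condition and place each $u_j$ in a dyadic interval $[2^{a_j},2^{a_j+1})$. A box meeting $\cR_d(X)$ satisfies $\sum ja_j\le\log_2X$ and $\sum(d-j)a_j\le\log_2X$, and it contains at most $2^{a_1+\cdots+a_{d-1}}$ integer points. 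Lemma \ref{lem:dyadic} then gives $K_d(X)\ll_d X^{2/d}(1+\log X)^{d-3}$.

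For the asymptotic \eqref{eq:Kdasymp} I will compare the squarefree count with the volume by means of Lemma \ref{lem:boxsieve}. Fix $\eps>0$ and $\lambda>1$ close to $1$. Cover $[X^\eps,\infty)^{d-1}$ by multiplicative boxes $\prod_i(A_i,\lambda A_i]$ with $A_i=X^\eps\lambda^{m_i}$. I split these boxes into three types: those contained in $\cR_d(X)$ (inner), those meeting both $\cR_d(X)$ and its complement (boundary), and the rest. Every coordinate of an inner or boundary box is at least $X^\eps$, so $\min_iA_i\to\infty$ uniformly. Lemma \ref{lem:boxsieve} then says that each such box contains $(\Delta_{d-1}+o(1))$ times its lattice-point count of squarefree tuples. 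In addition, the lattice-point count of such a box equals $(1+o(1))$ times its volume, uniformly. Summing over inner boxes gives a lower bound
\[
(\Delta_{d-1}+o(1))\vol\bigl(\text{union of inner boxes}\bigr),
\]
and summing over inner and boundary boxes gives the matching upper bound, apart from points with $\min_j u_j<X^\eps$. By Lemma \ref{lem:coordinateboundary}, those points contribute $\le \delta(\eps)X^{2/d}(\log X)^{d-3}$, where $\delta(\eps)\to0$.

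The main obstacle is controlling the boundary boxes, that is, showing that their total volume is at most $\eta(\lambda)\,X^{2/d}(\log X)^{d-3}$ with $\eta(\lambda)\to0$ as $\lambda\to1$. I would argue in logarithmic coordinates. Since $\sum j\log x_j$ and $\sum (d-j)\log x_j$ each vary by at most $d^2\log\lambda$ across a box, every boundary box lies in
\[
\cR_d(X\lambda^{d^2})\setminus\cR_d(X\lambda^{-d^2}).
\]
(The two heights must each be shifted separately, but the region is monotone in $X$, so shifting both by the same factor suffices.) Lemma \ref{lem:volume} gives the volume of this shell as
\[
\alpha_d X^{2/d}(\log X)^{d-3}\bigl(\lambda^{2d}-\lambda^{-2d}+o(1)\bigr),
\]
which is small when $\lambda$ is close to $1$. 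In the same way, the union of the inner boxes contains
\[
\cR_d(X\lambda^{-d^2})\cap[X^\eps\lambda,\infty)^{d-1},
\]
whose volume is $\alpha_d X^{2/d}(\log X)^{d-3}(\lambda^{-2d}+o(1))$ minus a $\delta(\eps)$-proportion, again by Lemmas \ref{lem:volume} and \ref{lem:coordinateboundary}.

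Putting these together, we get
\[
\alpha_d\Delta_{d-1}\lambda^{-2d}-C\delta(\eps)\le\liminf \frac{K_d(X)}{X^{2/d}(\log X)^{d-3}}\le\limsup\frac{K_d(X)}{X^{2/d}(\log X)^{d-3}}\le\alpha_d\Delta_{d-1}\lambda^{2d}+C\delta(\eps).
\]
Letting $\lambda\to1$ and then $\eps\to0$ yields \eqref{eq:Kdasymp}. One point needs care: Lemma \ref{lem:boxsieve} is applied to a number of boxes that grows with $X$, so its $o(1)$ must be uniform over all boxes with $\min A_i\ge X^\eps$. This is exactly what the uniformity statement in that lemma provides.
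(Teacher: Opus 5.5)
Your proof is correct and follows essentially the same route as the paper. The upper bound comes from the dyadic reduction to Lemma \ref{lem:dyadic}, and the asymptotic from a sandwich by multiplicative boxes of ratio $\lambda$, with Lemma \ref{lem:boxsieve} applied uniformly on boxes whose coordinates are all at least $X^\eps$, the monomial shell controlled by Lemma \ref{lem:volume}, and small coordinates discarded via Lemma \ref{lem:coordinateboundary}. Anchoring your boxes at $X^\eps$ (so that none straddles $x_j=X^\eps$) and sandwiching directly between $\cR_d(X\lambda^{-d^2})$ and $\cR_d(X\lambda^{d^2})$ is only a cosmetic simplification of the paper's bookkeeping with $D_\eps(X)$, $U$ and $E$.
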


\begin{proof}
By \eqref{eq:Kddef}, \eqref{eq:factorization},
\eqref{eq:oppositefactorization} and \eqref{eq:defineRd}, $K_d(X)$ is the number of points of
$\mathcal S_{d-1}$ in $\cR_d(X)$.  The upper bound
\eqref{eq:Kdupper} follows at once by placing each coordinate in a
dyadic interval, discarding the arithmetic restrictions, and applying
Lemma \ref{lem:dyadic}.

We next transfer the density from Lemma \ref{lem:boxsieve} to the
region.  We give a lattice-point sandwich because the region is not hyperrectanglar‌.  Put
\[
 q=d-1,\qquad
 M_d=\sum_{j=1}^{d-1}j=\frac{d(d-1)}2,\qquad
 Q_d(X)=X^{2/d}(\log X)^{d-3}.
\]
Fix $\eps>0$ and $0<\eta\leq1$, put $\lambda=1+\eta$, and keep these parameters
fixed while $X\to\infty$.  Define the central region
\[
 D_\eps(X):=\{\boldsymbol x\in\cR_d(X):
                    x_j\geq X^\eps\text{ for every }j\}
\]
and partition the positive orthant into the disjoint multiplicative
boxes
\begin{align*}
 B_{\boldsymbol m}:=
 \prod_{j=1}^{q}(\lambda^{m_j},\lambda^{m_j+1}],
 \qquad \boldsymbol m\in\mathbb Z^q.
%\label{eq:multiplicativeboxes}
\end{align*}
Let $\mathcal G$ be the family of boxes meeting $D_\eps(X)$, let
$\mathcal I\subseteq\mathcal G$ consist of those boxes contained in
$D_\eps(X)$, and put
\[
 U:=\bigcup_{B\in\mathcal I}B,
 \qquad E:=\bigcup_{B\in\mathcal G\setminus\mathcal I}B.
\]
Every box in $\mathcal G$ has all lower endpoints at least
$X^\eps/\lambda$.  If $A_*(B)$ denotes the smallest lower endpoint of
$B$, then
\begin{align*}
 \#(B\cap\mathbb Z^q)
 =\bigl(1+O_{q,\eta}(A_*(B)^{-1})\bigr)\vol(B).
%\label{eq:ordinaryboxcount}
\end{align*}
Indeed, $U\cup E\subseteq\cR_d(X\lambda^{M_d})$, whose volume is
$O_{d,\eta}(Q_d(X))$ by Lemma \ref{lem:volume}.
Together with the quantitative uniformity in Lemma
\ref{lem:boxsieve}, this shows, after summing over the disjoint boxes,
that
\begin{align}
 \#(\mathcal S_q\cap U)
 =\Delta_q\vol(U)+o_{\eps,\eta}(Q_d(X))
\label{eq:interiorboxdensity}
\end{align}
and
\begin{align}
 \#(E\cap\mathbb Z^q)
 =\vol(E)+o_{\eps,\eta}(Q_d(X)).
\label{eq:boundarylatticevolume}
\end{align}

We now bound the boundary union $E$.  If a box in $\mathcal G$ is not
contained in $\cR_d(X)$, comparison of any two of its points
coordinate by coordinate shows that the whole box lies in
\[
 \cR_d(X\lambda^{M_d})
 \setminus\cR_d(X\lambda^{-M_d}).
\]
If instead it crosses one of the artificial boundaries
$x_j=X^\eps$, then every point of that box has the corresponding
coordinate less than $\lambda X^\eps$.  Consequently,
\begin{align}
 E\subseteq
 \left(\cR_d(X\lambda^{M_d})
             \setminus\cR_d(X\lambda^{-M_d})\right)
 \cup
 \left\{\boldsymbol x\in\cR_d(X\lambda^{M_d}):
                  \min_jx_j<\lambda X^\eps\right\}.
\label{eq:boundaryboxcontainment}
\end{align}
Lemma \ref{lem:volume} gives
\begin{align}
 \vol(\cR_d(X\lambda^{M_d}))
 -\vol(\cR_d(X\lambda^{-M_d}))
 =&\,
 \left(\alpha_d
   (\lambda^{2M_d/d}-\lambda^{-2M_d/d})+o(1)\right)Q_d(X)\notag
 \\= &\, O_d(\eta)Q_d(X)+o_{d,\eta}(Q_d(X)).
\label{eq:monomialshell}
\end{align}
For fixed $\eps,\eta$ and sufficiently large $X$, if
$Y=X\lambda^{M_d}$, then $\lambda X^\eps<Y^{2\eps}$ and
$Q_d(Y)\sim\lambda^{2M_d/d}Q_d(X)$.  The volume assertion in Lemma
\ref{lem:coordinateboundary}, applied at $Y$, therefore implies
\begin{align}
 \limsup_{X\to\infty}\frac{\vol\big\{\boldsymbol x\in\cR_d(X\lambda^{M_d}):
                  \min_jx_j<\lambda X^\eps\big\}}{Q_d(X)}
 \leq \omega_{d,\eta}(\eps)
\label{eq:boundaryvolumesecond}
\end{align}
for some $\omega_{d,\eta}(\eps)$ satisfying $\omega_{d,\eta}(\eps)\longrightarrow 0
 \quad(\eps\rightarrow 0^{+})$.
Now \eqref{eq:boundaryboxcontainment}, \eqref{eq:monomialshell} and \eqref{eq:boundaryvolumesecond} together give
\begin{align}
 \limsup_{X\to\infty}\frac{\vol(E)}{Q_d(X)}
 \leq O_d(\eta)+\omega_{d,\eta}(\eps), \qquad \omega_{d,\eta}(\eps)\longrightarrow 0
 \quad(\eps\rightarrow 0^{+}).
\label{eq:boundaryvolume}
\end{align}
Equation \eqref{eq:boundarylatticevolume} supplies the corresponding
lattice-point bound.

Since $D_\eps(X)\setminus U\subseteq E$,
\eqref{eq:interiorboxdensity} and \eqref{eq:boundaryvolume} give
\begin{align}
 \#(\mathcal S_q\cap D_\eps(X))
 =\Delta_q\vol(D_\eps(X))
 +\Big(O_d(\eta)+O(\omega_{d,\eta}(\eps))+o_{\eps,\eta}(1)\Big)Q_d(X).
\label{eq:centraldensity}
\end{align}
Finally, the part of $\cR_d(X)$ outside $D_\eps(X)$ is negligible for
both volume and lattice points by Lemma
\ref{lem:coordinateboundary}.  After enlarging
$\omega_{d,\eta}(\eps)$ in \eqref{eq:centraldensity} to absorb these small-coordinate errors,
we have
\[
 \limsup_{X\to\infty}
 \frac{|K_d(X)-\Delta_q\vol(\cR_d(X))|}{Q_d(X)}
 \leq O_d(\eta)+O(\omega_{d,\eta}(\eps)).
\]
Letting first $\eps\rightarrow 0^{+}$ and then $\eta\rightarrow 0^{+}$ proves
\[
 K_d(X)=\Delta_{d-1}\vol(\cR_d(X))+o(Q_d(X)).
\]
Combining this with Lemma \ref{lem:volume} proves
\eqref{eq:Kdasymp}.
\end{proof}

\section{Proof of the main theorem}

Let $T=n^{1/2d}$. We now apply \eqref{eq:Kdasymp} in Proposition \ref{prop:Kd} to deduce that
\begin{align}\label{t1}\sum_{t\leq T}K_d\left(\frac{n}{t^d}\right)
 =&\, \Big(\alpha_d\Delta_{d-1}+o_{d}(1)\Big)
 n^{2/d}(\log n)^{d-3}\sum_{t\leq T}\frac{1}{t^2}+O_d(n^{2/d}(\log n)^{d-4})\notag
 \\ 
 =&\, \Big(\frac{\pi^2}{6}\alpha_d\Delta_{d-1}+o_{d}(1)\Big)
 n^{2/d}(\log n)^{d-3}.\end{align}
For $t>T$, we apply the uniform upper bound \eqref{eq:Kdupper} to obtain
\begin{align}\label{t2}\sum_{t>T}K_d\left(\frac{n}{t^d}\right)
 \ll_d &\, 
 n^{2/d}(\log n)^{d-3}\sum_{t>T}\frac{1}{t^2}\ll_d \frac{1}{T}
 n^{2/d}(\log n)^{d-3}.\end{align}
Put \eqref{t1} and \eqref{t2} together, we obtain
\begin{align*}\sum_{t\leq n^{1/d}}K_d\left(\frac{n}{t^d}\right)
 =&\, \Big(\frac{\pi^2}{6}\alpha_d\Delta_{d-1}+o_{d}(1)\Big)
 n^{2/d}(\log n)^{d-3}.\end{align*}

It remains to deal with $E_d(X)$.  If
$d$ is odd, \eqref{eq:Edevaluation} gives $E_d(X)=1$ and thus
\begin{align*}
 \sum_{t\leq n^{1/d}}E_d(n/t^d)\leq n^{1/d}=o(n^{2/d}(\log n)^{d-3}).
%\label{eq:selfodd}
\end{align*}
If $d$ is even, then $E_d(X)\leq X^{2/d}$, and hence (now $d\geq 4$)
\begin{align*}
 \sum_{t\leq n^{1/d}}E_d(n/t^d)
 \leq n^{2/d}\sum_{t\geq1}\frac1{t^2}
 \ll n^{2/d}=o(n^{2/d}(\log n)^{d-3}).
%\label{eq:selfeven}
\end{align*}
Now the proof of
\eqref{eq:main} is complete.  Formula
\eqref{eq:exactdifference} was proved in Proposition
\ref{prop:exact}.  This completes the proof of Theorem
\ref{thm:main}.

\medskip

We conclude by making the first two cases completely explicit.
\subsection{Cubes}
For $d=3$, the equations defining $\cP_3$ have the unique solution
$(1/3,1/3)$.  Also $J_3=3$, so $\alpha_3=3$.  Since
\[
 \Delta_2
 =\prod_p\left(1-\frac1p\right)^2\left(1+\frac2p\right)
 =\prod_p\left(1-\frac3{p^2}+\frac2{p^3}\right),
\]
formula \eqref{eq:Cd} gives
\[
 C_3=\frac{\pi^2}{12}\,\alpha_3\,\Delta_{2}
 =\frac{\pi^2}{4}\Delta_2.
\]
This proves Corollary
\ref{cor:d3}.

\subsection{Fourth powers}
For $d=4$,
\[
 \cP_4=\{(x,\tfrac12-2x,x):0\leq x\leq\tfrac14\}.
\]
This segment has length $\sqrt6/4$, while $J_4=4\sqrt6$.
Consequently $\alpha_4=1$.  Moreover,
\[
 \Delta_3
 =\prod_p\left(1-\frac1p\right)^3\left(1+\frac3p\right)
 =\prod_p\left(1-\frac6{p^2}+\frac8{p^3}-\frac3{p^4}\right).
\]
Thus 
$$C_4=\frac{\pi^2}{12}\,\alpha_4\,\Delta_{3}=\frac{\pi^2}{12}\Delta_3.$$
Theorem \ref{thm:main} and the value of $C_4$ prove Corollary
\ref{cor:d4}.

\section*{Acknowledgments}
This work is support by the National Key Research and Development Program of China (Grant No. 2021YFA1000701), National Natural Science Foundation of China  (Grant No. 12371005 and 12471088).

\end{document}